\def\ps@pprintTitle{%
 \let\@oddhead\@empty
 \let\@evenhead\@empty
 \def\@oddfoot{\centerline{\thepage}}%
 \let\@evenfoot\@oddfoot}
\newtheorem{theorem}{Theorem}
\newtheorem{lemma}{Lemma}
\newtheorem{notation}{Notation}
\newcommand{\R}{\mathbb{R}}
\newcommand{\EE}{\mathbb{E}}
\newcommand{\VV}{\mathbb{V}\mathrm{ar}}
\newcommand{\OO}{\mathcal O}
\newcommand{\leqdef}{\vcentcolon=}
\newcommand{\rd}{{\rm d}}
\begin{document}

\begin{frontmatter}

    \title{Refined normal approximations for the Student distribution}%

    \author[a1,a2]{Fr\'ed\'eric Ouimet\texorpdfstring{}{)}}%

    \address[a1]{California Institute of Technology, Pasadena, CA 91125, USA.}%
    \address[a2]{McGill University, Montreal, QC H3A 0B9, Canada.}%

    \cortext[cor1]{Corresponding author}%
    \ead{frederic.ouimet2@mcgill.ca}%

    \begin{abstract}
        In this paper, we develop a local limit theorem for the Student distribution.
        We use it to improve the normal approximation of the Student survival function given in \cite{doi:10.1109/LCOMM.2015.2442576} and to derive asymptotic bounds for the corresponding maximal errors at four levels of approximation.
        As a corollary, approximations for the percentage points (or quantiles) of the Student distribution are obtained in terms of the percentage points of the standard normal distribution.
    \end{abstract}

    \begin{keyword}
        asymptotic statistics, local limit theorem, Gaussian approximation, normal approximation, Student distribution, error bound, survival function, percentage point, quantiles, detection theory
        \MSC[2020]{Primary: 62E20 Secondary: 60F99}
    \end{keyword}

\end{frontmatter}

\section{Introduction}\label{sec:intro}

    For any $\nu > 2$, the density function of the Student $t_{\nu}$ distribution is defined by
    \begin{equation}\label{eq:Student.density}
        f_{\nu}(x) = \frac{\Gamma\big(\frac{\nu + 1}{2}\big)}{\sqrt{\nu \pi} \, \Gamma\big(\frac{\nu}{2}\big)} \bigg(1 + \frac{x^2}{\nu}\bigg)^{-\frac{\nu + 1}{2}}, \quad x\in \R.
    \end{equation}
    For all $\nu > 2$, the mean and variance of $X\sim t_{\nu}$ are well known to be
    \begin{equation}\label{eq:noncentral.Student.mean.variance}
        \EE[X] = 0 \quad \text{and} \quad \VV(X) = \frac{\nu}{\nu - 2}.
    \end{equation}

    The first goal of our paper (Lemma~\ref{lem:LLT.Student}) is to establish a local asymptotic expansion for the ratio of the Student density \eqref{eq:Student.density} to the normal density with the same mean and variance, namely:
    \begin{equation}\label{eq:phi.M}
        \frac{1}{\sqrt{\nu/(\nu-2)}} \phi(\delta_x), \quad \text{where } \phi(z) \leqdef \frac{e^{-z^2/2}}{\sqrt{2\pi}} ~~ \text{and} ~~ \delta_x \leqdef \frac{x}{\sqrt{\nu/(\nu-2)}}.
    \end{equation}

    The second goal of the paper (Theorem~\ref{thm:refined.approximations}) is to prove a refined approximation of the survival function of the Student $t_{\nu}$ distribution and derive asymptotic bounds on the corresponding maximal errors.
    The most relevant publication in that direction is \cite{doi:10.1109/LCOMM.2015.2442576}, where the authors prove that, as $\nu\to \infty$,
    \begin{equation}\label{eq:most.relevant}
        \max_{a\in \R} \Big|\int_a^{\infty} f_{\nu - 1}(x) \rd x - \int_a^{\infty} \phi(x) \rd x\Big| \leq \frac{\widetilde{M}_0}{\nu} + \frac{\widetilde{C}_0}{\nu^2},
    \end{equation}
    for some universal constant $\widetilde{C}_0 > 0$, where
    \begin{equation}
        \widetilde{M}_0 = \frac{1}{4} \sqrt{\frac{7 + 5 \sqrt{2}}{\pi e^{1 + \sqrt{2}}}} = 0.1582\dots
    \end{equation}
    In Theorem~\ref{thm:refined.approximations}, we expand on this result by adding (asymptotic) correction terms to the lower end point of the Gaussian integral in \eqref{eq:most.relevant}. In total, we present four levels of approximation, up to an $\OO(\nu^{-4})$ precision.

    The third goal of the paper (Theorem~\ref{thm:percentage.points}) is to obtain approximations for the percentage points (or quantiles) of the Student distribution in terms of the percentage points of the standard normal distribution, the latter of which is usually more readily available. \cite{MR130734} makes a compendium of the known percentage point approximations for the noncentral Student distribution up to that point in time and compares them. Some of the approximations are based on the works of \cite{doi:10.2307/2983626,MR2072,doi:10.1214/aoms/1177732482,MR16611,MR0086449,doi:10.2307/2333195,doi:10.1093/biomet/44.1-2.219,doi:10.2172/4303062}.
    The best approximations at that time turns out to be related to those in \cite{doi:10.1214/aoms/1177732482}, \cite{doi:10.2307/2983626} and \cite{MR16611}.
    \begin{notation}
        Throughout the paper, the notation $u = \OO(v)$ means that $\limsup |u / v| < C$, as $\nu\to \infty$, where $C > 0$ is a universal constant.
        Whenever $C$ might depend on some parameters, we add a subscript (for example, $u = \OO_{\eta}(v)$).
    \end{notation}

\section{Normal approximations to the Student distribution}\label{sec:main.results}

    First, we need local approximations for the ratio of the Student density to the normal density function with the same mean and variance.

    \begin{lemma}[Local approximation]\label{lem:LLT.Student}
        For any $\nu > 2$ and $\eta\in (0,1)$, define
        \begin{equation}\label{eq:bulk}
            B_{\nu}(\eta) \leqdef \bigg\{x\in \R : \bigg|\frac{\delta_x}{\sqrt{\nu - 2}}\bigg| \leq \eta \, \nu^{-1/4}\bigg\},
        \end{equation}
        denote the bulk of the Student distribution.
        Then, as $\nu\to \infty$ and uniformly for $x\in B_{\nu}(\eta)$, we have
        \begin{equation}\label{eq:lem:LLT.Student.eq.log}
            \begin{aligned}
                \log\bigg(\frac{f_{\nu}(x)}{\frac{1}{\sqrt{\nu/(\nu-2)}} \phi(\delta_x)}\bigg)
                &= \nu^{-1} \bigg\{\frac{1}{4} \delta_x^4 - \frac{3}{2} \delta_x^2 + \frac{3}{4}\bigg\} \\[-2mm]
                &\quad+ \nu^{-2} \bigg\{-\frac{1}{6} \delta_x^6 + \frac{5}{4} \delta_x^4 - 3 \delta_x^2 + 1\bigg\} \\
                &\quad+ \nu^{-3} \bigg\{\frac{1}{8} \delta_x^8 - \frac{7}{6} \delta_x^6 + 4 \delta_x^4 - 6 \delta_x^2 + \frac{11}{8}\bigg\} + \OO_{\eta}\bigg(\frac{1 + |\delta_x|^{10}}{\nu^4}\bigg),
            \end{aligned}
        \end{equation}
        Furthermore,
        \begin{equation}\label{eq:lem:LLT.Student.eq}
            \begin{aligned}
                \frac{f_{\nu}(x)}{\frac{1}{\sqrt{\nu/(\nu-2)}} \phi(\delta_x)} = 1
                &+ \nu^{-1} \cdot \bigg\{\frac{1}{4} \delta_x^4 - \frac{3}{2} \delta_x^2 + \frac{3}{4}\bigg\} \\[-2mm]
                &+ \nu^{-2} \cdot \bigg\{\frac{1}{32} \delta_x^8 - \frac{13}{24} \delta_x^6 + \frac{41}{16} \delta_x^4 - \frac{33}{8} \delta_x^2 + \frac{41}{32}\bigg\} \\
                &+ \nu^{-3} \cdot \left\{\hspace{-1mm}
                    \begin{array}{l}
                        \frac{1}{384} \delta_x^{12} - \frac{17}{192} \delta_x^{10} + \frac{127}{128} \delta_x^8 - \frac{457}{96} \delta_x^6 \\[2mm]
                        + \frac{1357}{128} \delta_x^4 - \frac{651}{64} \delta_x^2 + \frac{281}{128}
                    \end{array}
                    \hspace{-1mm}\right\} + \OO_{\eta}\bigg(\frac{1 + |\delta_x|^{16}}{\nu^4}\bigg).
            \end{aligned}
        \end{equation}
    \end{lemma}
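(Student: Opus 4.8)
The plan is to take logarithms, peel off a $\delta_x$‑free ``Gamma part'' from a ``power part'', expand each to three orders in $\nu^{-1}$, recombine, and finally exponentiate; the one delicate point will be propagating the error uniformly over the bulk $B_\nu(\eta)$.

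\emph{Reduction and Gamma part.} Taking the logarithm of the ratio, using $x^2 = \frac{\nu}{\nu-2}\delta_x^2$ to rewrite $1 + x^2/\nu = 1 + \delta_x^2/(\nu-2)$, and noticing that the normalizing constants collapse, one gets
\[
\log\!\left(\frac{f_{\nu}(x)}{\frac{1}{\sqrt{\nu/(\nu-2)}}\,\phi(\delta_x)}\right) = G(\nu) + P(\nu,x),
\]
where $G(\nu) \leqdef \log\Gamma\big(\tfrac{\nu+1}{2}\big) - \log\Gamma\big(\tfrac{\nu}{2}\big) + \tfrac12\log\tfrac{2}{\nu-2}$ and $P(\nu,x) \leqdef -\tfrac{\nu+1}{2}\log\big(1 + \tfrac{\delta_x^2}{\nu-2}\big) + \tfrac12\delta_x^2$. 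For $G$, I would feed the classical Stirling series into each Gamma factor (equivalently, invoke the standard expansion of $\Gamma(z+\tfrac12)/\Gamma(z)$), obtaining $\log\Gamma(\tfrac{\nu+1}{2}) - \log\Gamma(\tfrac{\nu}{2}) = \tfrac12\log(\nu/2) - \tfrac1{4\nu} + \tfrac1{24\nu^3} + \OO(\nu^{-4})$, and then add $\tfrac12\log\tfrac{2}{\nu-2}$ — so the logarithms collapse to $-\tfrac12\log(1-2/\nu)$, which I expand — to get $G(\nu) = \tfrac34\nu^{-1} + \nu^{-2} + \tfrac{11}{8}\nu^{-3} + \OO(\nu^{-4})$, which is exactly the $\delta_x$‑independent part of \eqref{eq:lem:LLT.Student.eq.log}.

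\emph{Power part and the log expansion.} Set $t \leqdef \delta_x^2/(\nu-2)$; on $B_\nu(\eta)$ one has $0\le t \le \eta^2\nu^{-1/2}$, hence $t \le \tfrac12$ for $\nu$ large, and also $|\delta_x|^2 \le \eta^2\sqrt{\nu}$. Using $\log(1+t) = t - \tfrac{t^2}{2} + \tfrac{t^3}{3} - \tfrac{t^4}{4} + \OO(t^5)$ together with the elementary identity $\frac{\nu+1}{(\nu-2)^k} = (\nu-2)^{-(k-1)} + 3(\nu-2)^{-k}$ and $(\nu-2)^{-m} = \nu^{-m}\sum_{j\ge0}\binom{m+j-1}{j}2^j\nu^{-j}$, each summand $\frac{(-1)^k}{2k}\frac{\nu+1}{(\nu-2)^k}\delta_x^{2k}$ for $k=1,2,3,4$ becomes a power series in $\nu^{-1}$ with polynomial‑in‑$\delta_x$ coefficients; the $k=1$ term absorbs the $\tfrac12\delta_x^2$, and collecting the $\nu^{-1},\nu^{-2},\nu^{-3}$ coefficients of $G+P$ reproduces the three bracketed polynomials in \eqref{eq:lem:LLT.Student.eq.log}. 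What remains is: (i) the $\OO(\nu^{-4})$ Stirling tail; (ii) $-\tfrac{\nu+1}{2}\sum_{k\ge5}\tfrac{(-1)^{k-1}}{k}t^k = \OO(\nu\,t^5) = \OO(\nu^{-4}|\delta_x|^{10})$, using $t\le\tfrac12$; and (iii) the $\OO(\nu^{-4})$ tails of the four $\nu^{-1}$‑series, multiplied by $\delta_x^2,\delta_x^4,\delta_x^6,\delta_x^8$ respectively. Since $\delta_x^{2j} \le 1 + |\delta_x|^{10}$ for $0\le j\le5$, every leftover is $\OO_\eta(\nu^{-4}(1+|\delta_x|^{10}))$, which is \eqref{eq:lem:LLT.Student.eq.log}.

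\emph{Exponentiation.} Write the right‑hand side of \eqref{eq:lem:LLT.Student.eq.log} as $q + r$, with $q = b_1\nu^{-1} + b_2\nu^{-2} + b_3\nu^{-3}$ ($b_i$ the bracketed polynomials, of respective degrees $4,6,8$) and $r = \OO_\eta(\nu^{-4}(1+|\delta_x|^{10}))$. The bulk bound $|\delta_x|^2 \le \eta^2\sqrt{\nu}$ gives $\nu^{-2}|b_2| + \nu^{-3}|b_3| = \OO_\eta(\nu^{-1}(1+|\delta_x|^4))$, so $q$ is uniformly bounded on $B_\nu(\eta)$ and $r\to0$; hence $e^{q+r} = e^{q}\big(1 + \OO_\eta(\nu^{-4}(1+|\delta_x|^{10}))\big)$, and a third‑order Taylor expansion yields $e^{q} = 1 + q + \tfrac12 q^2 + \tfrac16 q^3 + \OO(q^4 e^{|q|})$ with $q^4 e^{|q|} = \OO_\eta(\nu^{-4}(1+|\delta_x|^4)^4) = \OO_\eta(\nu^{-4}(1+|\delta_x|^{16}))$. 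Expanding $1 + q + \tfrac12 q^2 + \tfrac16 q^3$ and grouping by powers of $\nu^{-1}$ produces the coefficients $1$, $b_1$, $b_2 + \tfrac12 b_1^2$, $b_3 + b_1 b_2 + \tfrac16 b_1^3$ of $\nu^{0},\nu^{-1},\nu^{-2},\nu^{-3}$ — which one checks equal the polynomials in \eqref{eq:lem:LLT.Student.eq} — plus a finite family of leftover monomials $c_m\nu^{-m}$ with $m\ge4$ and $\deg_{\delta_x}c_m \le 2m+6$, each of which is $\OO_\eta(\nu^{-4}(1+|\delta_x|^{16}))$ on $B_\nu(\eta)$ by the same bulk inequality. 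This gives \eqref{eq:lem:LLT.Student.eq}. The main obstacle is not conceptual but the bookkeeping of this triple nested expansion (Stirling inside $\log$, then $\log$, then $\exp$): keeping every rational coefficient exact through three orders, and verifying that each stray monomial — in particular the high‑degree ones such as the degree‑$18$ term $\tfrac16 b_2^3\nu^{-6}$ arising in $\tfrac16 q^3$ — is genuinely absorbed by $\nu^{-4}(1+|\delta_x|^{16})$ on the bulk.
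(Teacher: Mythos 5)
Your proposal is correct and follows essentially the same route as the paper: take the logarithm, expand the Gamma ratio via Stirling's series, Taylor-expand $\log\big(1+\delta_x^2/(\nu-2)\big)$ and the rational coefficients $\frac{\nu+1}{(\nu-2)^k}$ in powers of $\nu^{-1}$ to get \eqref{eq:lem:LLT.Student.eq.log}, then exponentiate with a third-order Taylor expansion of $e^y$ using the uniform boundedness of the exponent on $B_\nu(\eta)$. Your error bookkeeping (the bulk bound $\delta_x^2\le\eta^2\sqrt{\nu}$, the $q^4e^{|q|}$ remainder, and the absorption of the leftover monomials of degree at most $2m+6$ into $\nu^{-4}(1+|\delta_x|^{16})$) is in fact somewhat more explicit than the paper's, but it is the same argument.
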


    For the interested reader, local approximations in the same vein as Lemma~\ref{lem:LLT.Student} were derived for the Poisson, binomial, negative binomial, multinomial, Dirichlet, Wishart and multivariate hypergeometric distributions in \cite[Lemma~2.1]{MR4213687}, \cite[Lemma~3.1]{MR4340237}, \cite[Lemma~2.1]{arXiv:2103.08846}, \cite[Theorem~2.1]{MR4249129}, \cite[Theorem~1]{MR4394974}, \cite[Theorem~1]{MR4358612}, \cite[Theorem~1]{MR4361955}, respectively.
    See also earlier references such as \cite{MR207011} (based on Fourier analysis results from \cite{MR14626}) for the Poisson, binomial and negative binomial distributions, and \cite{MR538319} for the binomial distribution.
    Another approach, using Stein's method, is used to study the variance-gamma distribution in \cite{MR3194737}. Also, Kolmogorov and Wasserstein distance bounds are derived in \cite{MR4291370,MR4064309} for the Laplace and variance-gamma distributions.

    By integrating the above local approximations, we can approximate the survival function of the Student $t_{\nu}$ distribution, i.e.,
    \begin{equation}
        S_{\nu}(a) \leqdef \int_a^{\infty} \hspace{-0.6mm} f_{\nu}(x) \rd x, \quad a\in \R,
    \end{equation}
    using the survival function of the normal distribution with the same mean and variance.

    \begin{theorem}[Survival function approximations]\label{thm:refined.approximations}
        As $\nu\to \infty$, we have
        \begin{align}
            &\hspace{-2cm}\text{Order 0 approximation:} \notag \\[-0.3mm]
            &E_0 \leqdef \max_{a\in \R} \Big|S_{\nu}(a) - \Psi(\delta_{a})\Big| \leq \frac{M_0}{\nu} + \frac{C_0}{\nu^2}, \label{eq:order.0.approx} \\[0.5mm]
            &\hspace{-2cm}\text{Order 1 approximation:} \notag \\[-0.3mm]
            &E_1 \leqdef \max_{a\in \R} \Big|S_{\nu}(a) - \Psi(\delta_{a - \frac{d_1}{\nu}})\Big| \leq \frac{M_1}{\nu^2} + \frac{C_1}{\nu^3}, \label{eq:order.1.approx} \\[0.5mm]
            &\hspace{-2cm}\text{Order 2 approximation:} \notag \\[-0.3mm]
            &E_2 \leqdef \max_{a\in \R} \Big|S_{\nu}(a) - \Psi(\delta_{a - (\frac{d_1}{\nu} + \frac{d_2}{\nu^2})})\Big| \leq \frac{M_2}{\nu^3} + \frac{C_2}{\nu^4}, \label{eq:order.2.approx} \\[0.5mm]
            &\hspace{-2cm}\text{Order 3 approximation:} \notag \\[-0.3mm]
            &E_3 \leqdef \max_{a\in \R} \Big|S_{\nu}(a) - \Psi(\delta_{a - (\frac{d_1}{\nu} + \frac{d_2}{\nu^2} + \frac{d_3}{\nu^3})})\Big| \leq \frac{C_3}{\nu^4}, \label{eq:order.3.approx}
        \end{align}
        where $\Psi$ denotes the survival function of the standard normal distribution, $C_i, ~i\in \{0,1,2,3\},$ are universal constants, and
        \begin{equation}\label{eq:d.k.M.k}
            \begin{aligned}
                d_1 &\leqdef \frac{\delta_a}{4} (\delta_a^2 - 3), \\
                d_2 &\leqdef - \frac{\delta_a}{96} (13 \delta_a^4 - 88 \delta_a^2 + 195), \\
                d_3 &\leqdef - \frac{\delta_a}{384} (35 \delta_a^6 - 293 \delta_a^4 + 1025 \delta_a^2 - 1767), \\[1mm]
                M_0 &\leqdef \max_{y\in \R} \frac{|y|}{4} |y^2 - 3| \phi(y) = 0.137647\dots, \\
                M_1 &\leqdef \max_{y\in \R} \frac{|y|}{96} |13 y^4 - 88 y^2 + 195| \phi(y) = 0.353017\dots, \\
                M_2 &\leqdef \max_{y\in \R} \frac{|y|}{384} |35 y^6 - 293 y^4 + 1025 y^2 - 1767| \phi(y) = 0.758112\dots.
            \end{aligned}
        \end{equation}
        The constants $M_1,M_2,M_3$ are illustrated in Figure~\ref{fig:asymptotics.E0.E1.E2} along with the corresponding rates of convergence.
    \end{theorem}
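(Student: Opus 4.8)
The plan is to obtain each of the four survival-function approximations by integrating the local expansion of Lemma~\ref{lem:LLT.Student} over the bulk $B_{\nu}(\eta)$, controlling the tails separately, and then absorbing the polynomial-in-$\delta_a$ correction terms into a shift of the argument of $\Psi$ via a Taylor expansion of $\Psi$ itself. Concretely, starting from \eqref{eq:lem:LLT.Student.eq}, I would write
\begin{equation}
    f_{\nu}(x) = \tfrac{1}{\sqrt{\nu/(\nu-2)}}\phi(\delta_x)\Big(1 + \nu^{-1} P_1(\delta_x) + \nu^{-2} P_2(\delta_x) + \nu^{-3} P_3(\delta_x) + \OO_{\eta}\big(\tfrac{1+|\delta_x|^{16}}{\nu^4}\big)\Big)
\end{equation}
on $B_{\nu}(\eta)$, where $P_1,P_2,P_3$ are the explicit polynomials appearing there. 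Since $\tfrac{1}{\sqrt{\nu/(\nu-2)}}\phi(\delta_x)\rd x = \phi(\delta_x)\rd\delta_x$, integrating from $a$ to $\infty$ gives $S_{\nu}(a)$ (up to a tail error) as $\Psi(\delta_a)$ plus $\nu^{-k}$ times integrals of the form $\int_{\delta_a}^{\infty} P_k(z)\phi(z)\rd z$. Using $\phi'(z) = -z\phi(z)$ and repeated integration by parts, each such integral equals $Q_k(\delta_a)\phi(\delta_a)$ for an explicit polynomial $Q_k$ (no leftover $\Psi$ term, because every $P_k$ is an even polynomial with zero constant-adjusted… more precisely because $\int z^{2m}\phi$ reduces to a polynomial multiple of $\phi$ plus a multiple of $\Psi$, and one checks the $\Psi$-coefficients cancel — this is exactly the algebraic identity that forces the definitions of $d_1,d_2,d_3$).

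Next, the key maneuver: for a smooth shift $c(\nu) = \OO(\nu^{-1})$ one has, by Taylor's theorem and $\Psi'=-\phi$,
\begin{equation}
    \Psi(\delta_{a-c}) = \Psi(\delta_a) + \frac{c}{\sqrt{\nu/(\nu-2)}}\,\phi(\delta_a) + \OO\big(c^2(1+\delta_a^2)\phi(\delta_a)\big),
\end{equation}
and more terms if $c$ is expanded further; note $\tfrac{1}{\sqrt{\nu/(\nu-2)}} = 1 - \tfrac{1}{\nu} + \OO(\nu^{-2})$, which I must track carefully since it mixes orders. I would then choose $d_1$ so that the $\nu^{-1}$ term $Q_1(\delta_a)\phi(\delta_a)$ is exactly matched by the linear term of $\Psi(\delta_{a-d_1/\nu})$; comparing coefficients yields $d_1 = \tfrac{\delta_a}{4}(\delta_a^2-3)$, consistent with $M_0 = \max_y \tfrac{|y|}{4}|y^2-3|\phi(y)$ being precisely $\max_a|S_\nu(a)-\Psi(\delta_a)|\cdot\nu + \OO(\nu^{-1})$. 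Iterating: having cancelled order $\nu^{-1}$, the residual at order $\nu^{-2}$ is a new explicit $Q^{(2)}(\delta_a)\phi(\delta_a)$ (combining $Q_2$, the cross term from the $1/\sqrt{\nu/(\nu-2)}$ factor, and the quadratic Taylor term in $d_1$); setting this equal to the linear Taylor contribution of $d_2/\nu^2$ pins down $d_2$, and likewise $d_3$ at the next order. The stated forms of $d_2,d_3$ and of $M_1,M_2$ (and the claimed $M_3$) are the output of this bookkeeping.

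For the error bounds themselves: on $B_{\nu}(\eta)$ the remainder $\OO_\eta(\nu^{-4}(1+|\delta_x|^{16}))$ integrates against $\phi$ to $\OO_\eta(\nu^{-4})$ (Gaussian moments are finite and $\eta$-free after integration), giving the $C_i/\nu^{i+1}$ leftover term; the $M_i/\nu^{i+1}$ term is the genuine leading error at that order, equal to $\max_a$ of the first uncancelled polynomial times $\phi(\delta_a)$, which by a change of variables is exactly $\max_{y\in\R}$ of the displayed expressions. Outside the bulk, on $B_{\nu}(\eta)^c$, I would bound $|S_\nu(a) - \Psi(\delta_{a-c(\nu)})|$ by $S_\nu(a) + \Psi(\ldots)$ (or their complements) and use standard Student and Gaussian tail estimates — e.g.\ $S_\nu(a) \le C\,a^{-\nu}$-type bounds or a direct estimate via $\int_a^\infty (1+x^2/\nu)^{-(\nu+1)/2}\rd x$ — to show this is $\oo(\nu^{-K})$ for every $K$, hence negligible; choosing $\eta$ small but fixed makes the bulk remainder constant uniform.

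The main obstacle I anticipate is the order-by-order coefficient bookkeeping: the factor $(\nu/(\nu-2))^{-1/2}$ must be expanded as $1 - \nu^{-1} + \tfrac{3}{2}\nu^{-2} - \cdots$ and its cross-contributions with the $P_k$'s and with the multi-term Taylor expansion of $\Psi$ around the accumulated shift $\sum d_j\nu^{-j}$ all have to be collected at the correct order before one can read off $d_2$ and $d_3$ — a single misplaced term corrupts everything downstream. A secondary, more mechanical difficulty is verifying that the $\Psi$-coefficients cancel in each $\int_{\delta_a}^\infty P_k(z)\phi(z)\rd z$ so that one is left with a pure $\phi(\delta_a)\times(\text{polynomial})$ residual that a shift of $\Psi$ can absorb; this is where the specific numerical coefficients in Lemma~\ref{lem:LLT.Student} (rather than generic ones) matter, and it is worth isolating as a short sublemma. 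The tail estimates are routine by comparison to the leading-order normal approximation \eqref{eq:most.relevant} already available from \cite{doi:10.1109/LCOMM.2015.2442576}.
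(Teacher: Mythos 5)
Your proposal follows essentially the same route as the paper: integrate the local expansion of Lemma~\ref{lem:LLT.Student} over the bulk, reduce the incomplete Gaussian moments to polynomial-times-$\phi(\delta_a)$ form (the $\Psi$-coefficients do cancel at each order, exactly as you anticipated), Taylor-expand the shifted normal survival function in the accumulated shift $c=\sum_k d_k\nu^{-k}$, choose $d_1,d_2,d_3$ order by order to cancel the residuals, and dispose of the region outside the bulk by large-deviation/tail bounds. One small caveat in your bookkeeping aside: $\sqrt{(\nu-2)/\nu}=1-\nu^{-1}-\tfrac{1}{2}\nu^{-2}+\OO(\nu^{-3})$, not $1-\nu^{-1}+\tfrac{3}{2}\nu^{-2}-\cdots$, which matters when extracting $d_2$ and $d_3$ but does not affect the validity of your overall strategy.
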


    As a corollary to Theorem~\ref{thm:refined.approximations}, we obtain asymptotic expansions for the percentage points (or quantiles) of the Student distribution in terms of the percentage points of the standard normal distribution.

    \begin{theorem}[Percentage point approximations]\label{thm:percentage.points}
        Let $\nu > 2$, and let $\alpha\in (0,1)$ be such that $\alpha = S_{\nu}(\lambda)$ for some $\lambda\in B_{\nu}(\eta)$ and $\eta\in (0,1)$.
        As $\nu\to \infty$, we have
        \begin{equation}
            \begin{aligned}
                \frac{\alpha}{\phi(\lambda)}
                &= \frac{\Psi(\lambda)}{\phi(\lambda)} + \left(\frac{\lambda}{\nu} + \frac{d_1}{\nu}\right) + \OO(\nu^{-2}), \\[1mm]
                \frac{\alpha}{\phi(\lambda)}
                &= \frac{\Psi(\lambda)}{\phi(\lambda)} + \left(\frac{\lambda}{\nu} + \frac{\lambda}{2 \nu^2} + \frac{d_1}{\nu} + \frac{d_2}{\nu^2} - \frac{d_1}{\nu^2}\right) \\
                &\quad+ \frac{\lambda}{2} \left(\frac{\lambda^2}{\nu^2} + \frac{2 \lambda d_1}{\nu^2} + \frac{d_1^2}{\nu^2}\right) + \OO(\nu^{-3}), \\[1mm]
                \frac{\alpha}{\phi(\lambda)}
                &= \frac{\Psi(\lambda)}{\phi(\lambda)} + \left(\frac{\lambda}{\nu} + \frac{\lambda}{2 \nu^2} + \frac{\lambda}{2 \nu^3} + \frac{d_1}{\nu} + \frac{d_2}{\nu^2} + \frac{d_3}{\nu^3} - \frac{d_1}{\nu^2} - \frac{d_2}{\nu^3} - \frac{d_1}{2 \nu^3}\right) \\
                &\quad+ \frac{\lambda}{2} \left(\frac{\lambda^2}{\nu^2} + \frac{2 \lambda d_1}{\nu^2} + \frac{d_1^2}{\nu^2} + \frac{\lambda^2}{\nu^3} + \frac{2 \lambda d_2}{\nu^3} - \frac{\lambda d^1}{\nu^3} + \frac{2 d_1 d_2}{\nu^3} - \frac{2 d_1^2}{\nu^3}\right) \\
                &\quad+ \frac{(\lambda^2 - 1)}{6} \left(\frac{\lambda^3}{\nu^3} + \frac{d_1^3}{\nu^3}\right) + \OO(\nu^{-4}).
            \end{aligned}
        \end{equation}
        We approximate the $100 \cdot (1 - \alpha) \%$ percentile of the Student distribution by solving numerically for $\lambda$ in one of the three equations above (ignoring the $\OO(\cdot)$ terms).
    \end{theorem}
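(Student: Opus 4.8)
The plan is to obtain this as a direct corollary of Theorem~\ref{thm:refined.approximations}. Fix the order $k\in\{1,2,3\}$ and write $\rho_\nu^{(k)} \leqdef d_1\nu^{-1} + \dots + d_k\nu^{-k}$, so that Theorem~\ref{thm:refined.approximations} evaluated at $a = \lambda$ reads $\alpha = S_\nu(\lambda) = \Psi\big(\delta_{\lambda - \rho_\nu^{(k)}}\big) + \OO(\nu^{-(k+1)})$. The proof then amounts to Taylor-expanding this Gaussian survival function about $\lambda$, re-expanding its argument in powers of $\nu^{-1}$, dividing by $\phi(\lambda)$, and collecting terms of each order.

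First I would record a harmless technical point that legitimizes dividing the $\OO(\nu^{-(k+1)})$ error by $\phi(\lambda)$. Since $\alpha\in(0,1)$ is fixed, $S_\nu$ is continuous and strictly decreasing, and $S_\nu\to\Psi$ pointwise (this follows from Lemma~\ref{lem:LLT.Student} after integration, or from the classical weak convergence $t_\nu\to N(0,1)$), the root $\lambda=\lambda_\nu$ of $S_\nu(\lambda)=\alpha$ is unique and $\lambda_\nu\to\Psi^{-1}(\alpha)$ as $\nu\to\infty$. In particular $\lambda_\nu$ stays in a fixed compact set --- hence it lies in $B_\nu(\eta)$ for all large $\nu$, since $B_\nu(\eta)$ eventually contains every compact set --- and $\phi(\lambda_\nu)\ge c>0$; so dividing the universal error supplied by Theorem~\ref{thm:refined.approximations} by $\phi(\lambda)$ still leaves an $\OO(\nu^{-(k+1)})$ term, which is what ultimately controls the remainders $\OO(\nu^{-2}),\OO(\nu^{-3}),\OO(\nu^{-4})$.

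Next I would expand $\theta_\nu \leqdef \delta_{\lambda-\rho_\nu^{(k)}} - \lambda$. Using $\delta_x = x\sqrt{1-2/\nu}$ together with the binomial series $\sqrt{1-2/\nu} = 1 - \nu^{-1} - \tfrac12\nu^{-2} - \tfrac12\nu^{-3} - \OO(\nu^{-4})$, a short computation gives $\theta_\nu = -(\lambda+d_1)\nu^{-1} + \OO(\nu^{-2})$, and more generally a $\nu^{-1}$-expansion with $\nu$-bounded coefficients up to remainder $\OO(\nu^{-4})$. Then, using $\Psi'(z)=-\phi(z)$, $\Psi''(z)=z\phi(z)$ and $\Psi'''(z)=(1-z^2)\phi(z)$, Taylor's theorem about $\lambda$ gives
\begin{equation*}
\alpha = \Psi(\lambda) - \phi(\lambda)\,\theta_\nu + \tfrac12\,\lambda\,\phi(\lambda)\,\theta_\nu^2 + \tfrac16\,(1-\lambda^2)\,\phi(\lambda)\,\theta_\nu^3 + \OO(\theta_\nu^4) + \OO(\nu^{-(k+1)}),
\end{equation*}
the fourth-order Taylor remainder being $\OO(\nu^{-4})$ because its argument stays in a fixed compact set on which $\Psi^{(4)}$ is bounded. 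Dividing by $\phi(\lambda)$ and substituting the $\nu^{-1}$-expansion of $\theta_\nu$ --- to order $\nu^{-k}$ in $\theta_\nu$, $\theta_\nu^2$ and $\theta_\nu^3$ --- I would collect the coefficient of each power $\nu^{-1},\nu^{-2},\nu^{-3}$: the linear term $-\theta_\nu$ produces the bracket $\lambda/\nu+d_1/\nu+\cdots$, the quadratic term $\tfrac12\lambda\theta_\nu^2$ produces the $\tfrac{\lambda}{2}(\cdot)$ bracket, and the cubic term produces the $\tfrac{\lambda^2-1}{6}(\cdot)$ bracket, yielding the three displayed identities for $k=1,2,3$ respectively.

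The hard part is organizational rather than conceptual: three nested asymptotic expansions --- the square-root series defining $\delta$, the polynomials $d_1,d_2,d_3$ (which one keeps symbolic), and the Taylor series of $\Psi$ about $\lambda$ --- must be multiplied out and their $\nu^{-j}$ coefficients matched against the precise grouping displayed in the statement, while checking at each order that every discarded product is genuinely of smaller order. It is precisely the boundedness of $\lambda_\nu$ and of the derivatives $\phi^{(i)}$ on compact sets, established in the second step, that makes all of those truncations legitimate.
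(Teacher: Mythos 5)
Your proposal follows essentially the same route as the paper's proof: apply Theorem~\ref{thm:refined.approximations} at $a=\lambda$, expand $\delta_{\lambda-c}-\lambda$ via the binomial series for $\sqrt{(\nu-2)/\nu}$, Taylor-expand $\Psi$ about $\lambda$ up to third order (using $\Psi'=-\phi$, $\Psi''(z)=z\phi(z)$, $\Psi'''(z)=(1-z^2)\phi(z)$), divide by $\phi(\lambda)$, and collect powers of $\nu^{-1}$. Your preliminary justification for dividing the error by $\phi(\lambda)$ (fixed $\alpha$ forces $\lambda$ into a compact set, so $\phi(\lambda)$ is bounded below) is a point the paper leaves implicit; otherwise the two arguments coincide.
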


    \begin{center}
    \begin{figure}[!ht]
        \captionsetup[subfigure]{labelformat=empty}
        \captionsetup{width=0.80\linewidth}
        \centering
        \begin{subfigure}[b]{0.46\textwidth}
            \centering
            \includegraphics[width=\textwidth, height=0.85\textwidth]{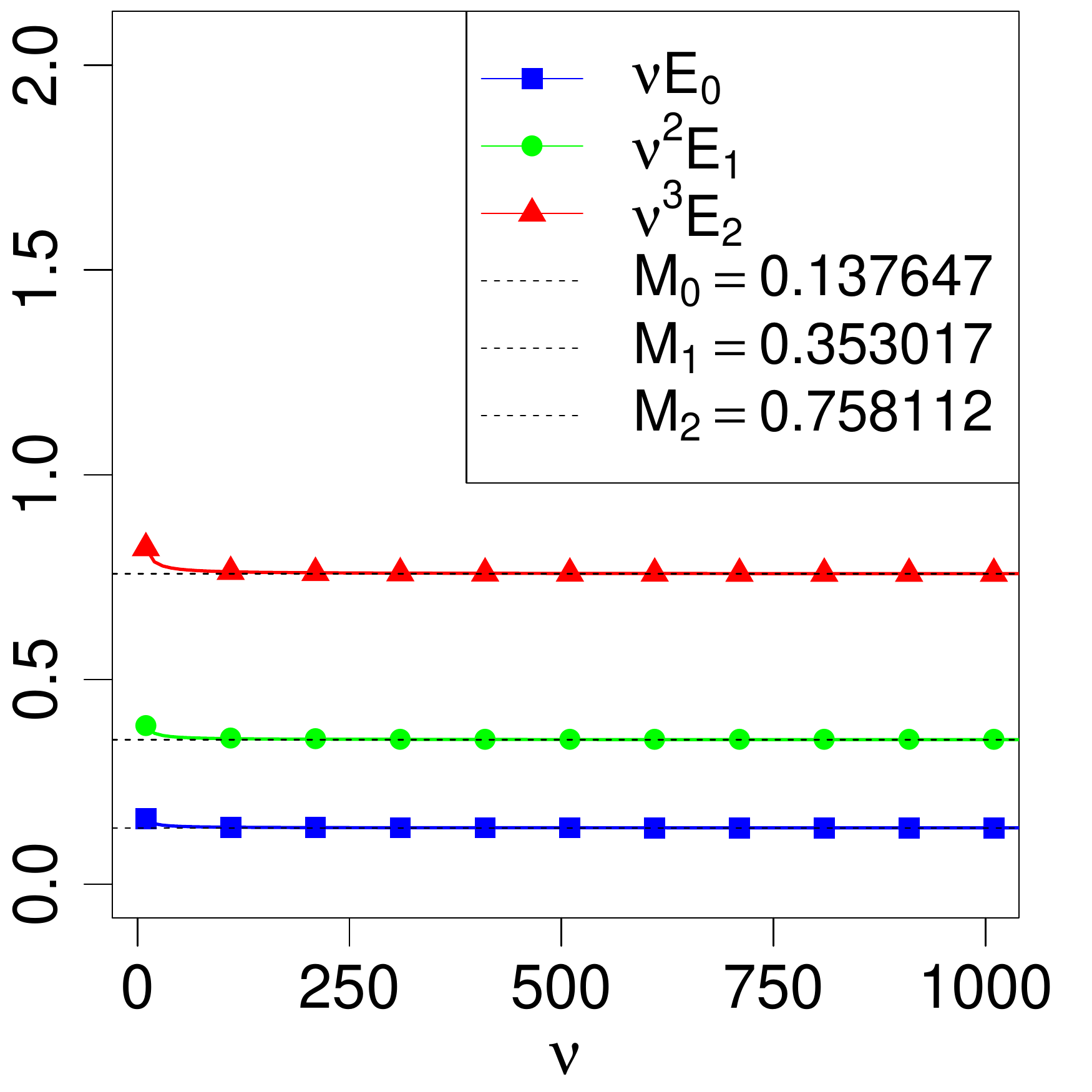}
        \end{subfigure}
        \quad
        \begin{subfigure}[b]{0.46\textwidth}
            \centering
            \includegraphics[width=\textwidth, height=0.85\textwidth]{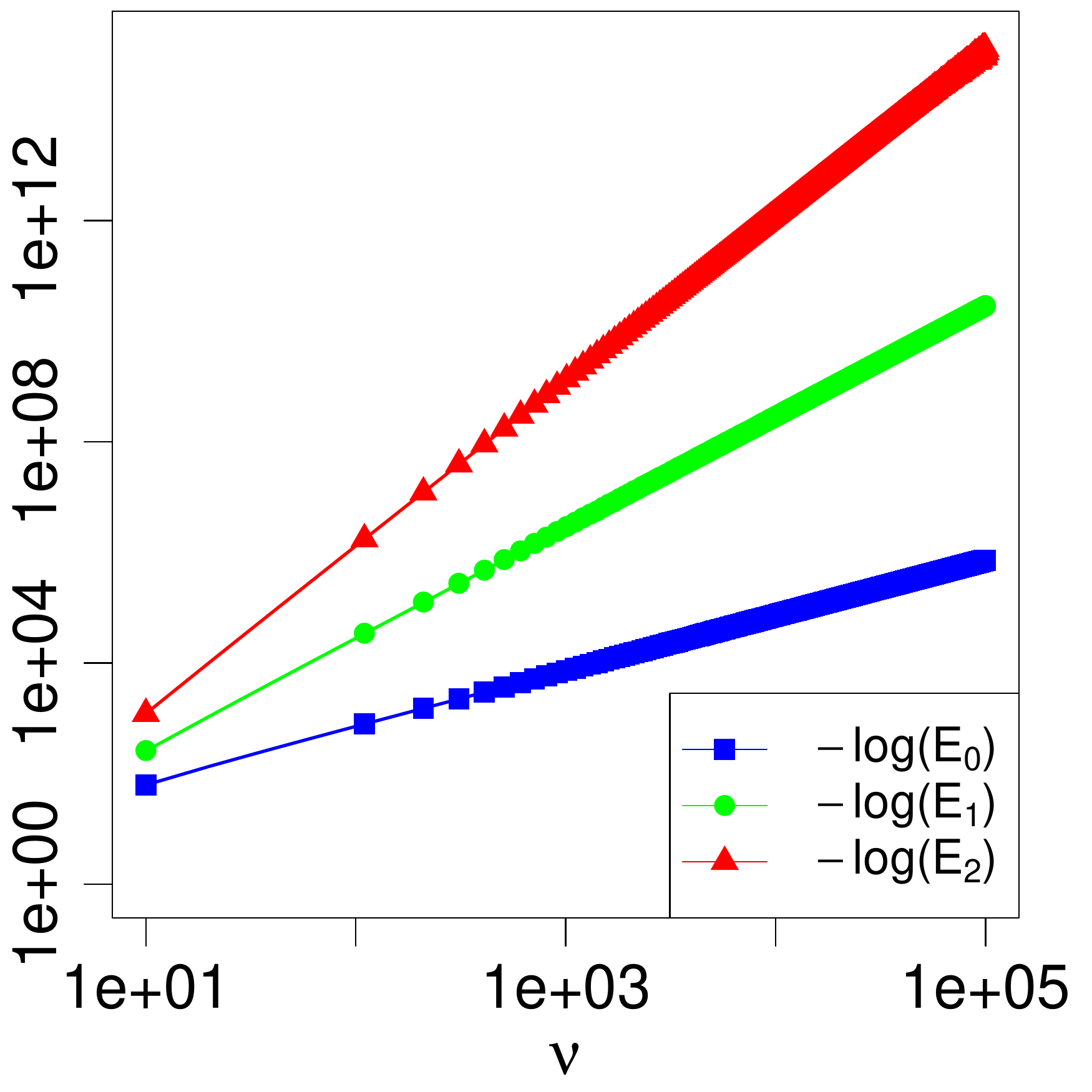}
        \end{subfigure}
        \caption{Numerical illustration of the asymptotic constants $M_i$ (on the left) and the log-log plot for the maximum absolute errors as a function of $\nu$ (on the right) for the first three levels of approximation.}
        \label{fig:asymptotics.E0.E1.E2}
        \vspace{-3mm}
    \end{figure}
    \end{center}

\section{Proofs}\label{sec:proofs}

    \begin{proof}[Proof of Lemma~\ref{lem:LLT.Student}]
        By taking the logarithm in \eqref{eq:Student.density}, we have
        \begin{equation}\label{eq:LLT.beginning}
            \begin{aligned}
                \log\bigg(\frac{f_{\nu}(x)}{\frac{1}{\sqrt{\nu/(\nu-2)}} \phi(\delta_x)}\bigg)
                &= - \frac{1}{2} \log \Big(\frac{\nu-2}{2}\Big) - \frac{\nu + 1}{2} \log \bigg(1 + \Big(\frac{\delta_x}{\sqrt{\nu - 2}}\Big)^2\bigg) \\[-1mm]
                &\quad+ \log \Gamma\Big(\frac{\nu + 1}{2}\Big) - \log \Gamma \Big(\frac{\nu}{2}\Big) + \frac{1}{2} \delta_x^2.
            \end{aligned}
        \end{equation}
        Using the expansions
        \begin{equation}
            \begin{aligned}
                \log \Gamma\Big(\frac{\nu + 1}{2}\Big)
                &= \Big(\frac{\nu}{2}\Big) \log \Big(\frac{\nu + 1}{2}\Big) - \frac{\nu}{2} - \frac{1}{2} + \frac{1}{2} \log (2\pi) \\
                &\quad+ \frac{2}{12 (\nu + 1)} - \frac{2^3}{360 (\nu + 1)^3} + \OO(\nu^{-4}), \\[1.5mm]
                \log \Gamma\Big(\frac{\nu}{2}\Big)
                &= \Big(\frac{\nu}{2} - \frac{1}{2}\Big) \log \Big(\frac{\nu}{2}\Big) - \frac{\nu}{2} + \frac{1}{2} \log (2\pi) \\
                &\quad+ \frac{2}{12 \nu} - \frac{2^3}{360 \nu^3} + \OO(\nu^{-5}),
            \end{aligned}
        \end{equation}
        (see, e.g., \cite[p.257]{MR0167642})
        and
        \begin{equation}
            \begin{aligned}
                \log \Gamma\Big(\frac{\nu + 1}{2}\Big) - \log \Gamma \Big(\frac{\nu}{2}\Big)
                &= \Big(\frac{\nu}{2}\Big) \log \Big(\frac{\nu + 1}{2}\Big) - \Big(\frac{\nu}{2} - \frac{1}{2}\Big) \log \Big(\frac{\nu}{2}\Big) \\
                &\quad- \frac{1}{2} + \frac{2}{12 (\nu + 1)} - \frac{2}{12 \nu} + \OO(\nu^{-4}) \\
                &= \frac{1}{2} \log \Big(\frac{\nu}{2}\Big) - \frac{1}{4 \nu} + \frac{1}{24 \nu^3} + \OO(\nu^{-4}),
            \end{aligned}
        \end{equation}
        we can rewrite \eqref{eq:LLT.beginning} as
        \begin{equation}\label{eq:LLT.beginning.next.1}
            \begin{aligned}
                \log\bigg(\frac{f_{\nu}(x)}{\frac{1}{\sqrt{\nu/(\nu-2)}} \phi(\delta_x)}\bigg)
                &= - \frac{1}{2} \log \Big(\frac{\nu-2}{2}\Big) - \frac{\nu + 1}{2} \log \bigg(1 + \Big(\frac{\delta_x}{\sqrt{\nu - 2}}\Big)^2\bigg) \\[-0.5mm]
                &\quad+\frac{1}{2} \log \Big(\frac{\nu}{2}\Big) - \frac{1}{4 \nu} + \frac{1}{24 \nu^3} + \frac{1}{2} \delta_x^2 + \OO(\nu^{-4}).
            \end{aligned}
        \end{equation}
        Using the Taylor expansions
        \begin{equation}
            - \frac{1}{2} \log \Big(\frac{\nu-2}{2}\Big) + \frac{1}{2} \log \Big(\frac{\nu}{2}\Big) = \frac{1}{\nu} + \frac{1}{\nu^2} + \frac{4}{3 \nu^3} + \OO(\nu^{-4}),
        \end{equation}
        and
        \begin{equation}\label{eq:Taylor.log.1.plus.y}
            \log(1 + y) = y - \frac{y^2}{2} + \frac{y^3}{3} - \frac{y^4}{4} + \OO_{\eta}(y^5), \quad |y| \leq \eta,
        \end{equation}
        we have
        \begin{equation}\label{eq:LLT.beginning.next.2}
            \begin{aligned}
                \log\bigg(\frac{f_{\nu}(x)}{\frac{1}{\sqrt{\nu/(\nu-2)}} \phi(\delta_x)}\bigg)
                &= \frac{3}{4 \nu} + \frac{1}{\nu^2} + \frac{11}{8 \nu^3} + \left[\frac{1}{2} - \frac{\nu + 1}{2 (\nu - 2)}\right] \delta_x^2 \\[-2mm]
                &\quad+ \frac{\nu + 1}{4 (\nu - 2)^2} \delta_x^4 - \frac{\nu + 1}{6 (\nu - 2)^3} \delta_x^6 + \frac{\nu + 1}{8 (\nu - 2)^4} \delta_x^8 \\[0.5mm]
                &\quad+ \OO_{\eta}\bigg(\frac{1 + |\delta_x|^{10}}{\nu^4}\bigg).
            \end{aligned}
        \end{equation}
        Now,
        \begin{equation}
            \begin{aligned}
                \frac{1}{2} - \frac{\nu + 1}{2 (\nu - 2)} &= -\frac{3}{2\nu} - \frac{3}{\nu^2} - \frac{6}{\nu^3} + \OO(\nu^{-4}), \\[0.5mm]
                \frac{\nu + 1}{4 (\nu - 2)^2} &= \frac{1}{4 \nu} + \frac{5}{4 \nu^2} + \frac{4}{\nu^3} + \OO(\nu^{-4}), \\[0.5mm]
                -\frac{\nu + 1}{6 (\nu - 2)^3} &= -\frac{1}{6 \nu^2} - \frac{7}{6 \nu^3} + \OO(\nu^{-4}), \\[0.5mm]
                \frac{\nu + 1}{8 (\nu - 2)^4} &= \frac{1}{8 \nu^3} + \OO(\nu^{-4}),
            \end{aligned}
        \end{equation}
        so we can rewrite \eqref{eq:LLT.beginning.next.2} as
        \begin{equation}\label{eq:LLT.beginning.next.3}
            \begin{aligned}
                \log\bigg(\frac{f_{\nu}(x)}{\frac{1}{\sqrt{\nu/(\nu-2)}} \phi(\delta_x)}\bigg)
                &= \frac{\frac{1}{4} \delta_x^4 - \frac{3}{2} \delta_x^2 + \frac{3}{4}}{\nu} + \frac{-\frac{1}{6} \delta_x^6 + \frac{5}{4} \delta_x^4 - 3 \delta_x^2 + 1}{\nu^2} \\[-2.5mm]
                &\quad+ \frac{\frac{1}{8} \delta_x^8 - \frac{7}{6} \delta_x^6 + 4 \delta_x^4 - 6 \delta_x^2 + \frac{11}{8}}{\nu^3} + \OO_{\eta}\bigg(\frac{1 + |\delta_x|^{10}}{\nu^4}\bigg),
            \end{aligned}
        \end{equation}
        which proves \eqref{eq:lem:LLT.Student.eq.log}.
        To obtain \eqref{eq:lem:LLT.Student.eq} and conclude the proof, we take the exponential on both sides of the last equation and we expand the right-hand side with
        \begin{equation}\label{eq:Taylor.exponential}
            e^y = 1 + y + \frac{y^2}{2} + \frac{y^3}{6} + \OO(e^{\widetilde{\eta}} y^4), \quad \text{for } -\infty < y \leq \widetilde{\eta}.
        \end{equation}
        For $\nu$ large enough and uniformly for $x\in B_{\nu}(\eta)$, the right-hand side of \eqref{eq:LLT.beginning.next.3} is $\OO(1)$.
        When this bound is taken as $y$ in \eqref{eq:Taylor.exponential}, it explains the error in \eqref{eq:lem:LLT.Student.eq}.
    \end{proof}

    \begin{proof}[Proof of Theorem~\ref{thm:refined.approximations}]
        By large deviation bounds, the approximations are trivial when $a\not\in B_{\nu}(1/2)$.
        Therefore, for the remainder of the proof, we assume that $a\in B_{\nu}(1/2)$.
        Let
        \begin{equation}
            c = \frac{d_1}{\nu} + \frac{d_2}{\nu^2} + \frac{d_3}{\nu^3},
        \end{equation}
        where $d_1,d_2,d_3\in \R$ are to be chosen later, then we have the Taylor expansion
        \begin{equation}\label{eq:c}
            \begin{aligned}
                \int_{\delta_{a-c}}^{\delta_a} \phi(y) \rd y
                &= \phi(\delta_a) \int_{\delta_{a-c}}^{\delta_a} \rd y + \phi'(\delta_a) \int_{\delta_{a-c}}^{\delta_a} (y - \delta_a) \rd y + \frac{\phi''(\delta_a)}{2} \int_{\delta_{a-c}}^{\delta_a} (y - \delta_a)^2 \rd y \\
                &\quad+ \OO\left(\frac{\phi'''(\delta_a)}{6} \int_{\delta_{a-c}}^{\delta_a} (y - \delta_a)^3 \rd y\right) \\
                &= \phi(\delta_a) \, \left\{\hspace{-1mm}
                \begin{array}{l}
                    \frac{c}{\sqrt{\nu / (\nu - 2)}} + \frac{c^2 \, \delta_a}{2 \nu / (\nu - 2)} + \frac{c^3 (\delta_a^2 - 1)}{6 \, \nu^{3/2} / (\nu - 2)^{3/2}} + \OO\big(\frac{1 + |\delta_a|^3}{\nu^4}\big)
                \end{array}
                \hspace{-1mm}\right\} \\
                &= \phi(\delta_a) \, \left\{\hspace{-1mm}
                \begin{array}{l}
                    \nu^{-1} \, d_1 + \nu^{-2} \, \big(\frac{\delta_a}{2} d_1^2 - d_1 + d_2\big) \\[1.5mm]
                    + \nu^{-3} \, \left(\hspace{-1mm}
                        \begin{array}{l}
                            \frac{1}{6} (\delta_a^2 - 1) d_1^3 - \delta_a d_1^2 - \frac{1}{2} d_1\\[0mm]
                            + \delta_a d_1 d_2 - d_2 + d_3
                        \end{array}
                        \hspace{-1mm}\right) + \OO\big(\frac{1 + |\delta_a|^3}{\nu^4}\big)
                \end{array}
                \hspace{-1mm}\right\}.
            \end{aligned}
        \end{equation}
        We also have the straightforward large deviation bounds
        \begin{equation}\label{eq:LD}
            \begin{aligned}
                &\int_{[a,\infty) \cap B_{\nu}^c(1/2)} \hspace{-0.6mm} f_{\nu}(x) \rd x = \OO(e^{-\beta \nu^{1/2}}), \\
                &\int_{[a,\infty) \cap B_{\nu}^c(1/2)} \hspace{-0.6mm} \phi(y) \rd y = \OO(e^{-\beta \nu^{1/2}}),
            \end{aligned}
        \end{equation}
        where $\beta > 0$ is a small enough constant, and the local approximation in Lemma~\ref{lem:LLT.Student} yields
        \begin{equation}\label{eq:Cressie.generalization.eq.4}
            \begin{aligned}
                \int_a^{\infty} f_{\nu}(x) \rd x - \int_{\delta_a}^{\infty} \hspace{-1mm} \phi(y) \rd y
                &= \nu^{-1} \bigg\{\frac{1}{4} \Psi_4(\delta_x) - \frac{3}{2} \Psi_2(\delta_x) + \frac{3}{4} \Psi(\delta_x)\bigg\} \\
                &\quad+ \nu^{-2} \left\{\hspace{-1mm}
                    \begin{array}{l}
                        \frac{1}{32} \Psi_8(\delta_x) - \frac{13}{24} \Psi_6(\delta_x) + \frac{41}{16} \Psi_4(\delta_x) \\[1.5mm]
                        - \frac{33}{8} \Psi_2(\delta_x) + \frac{41}{32} \Psi(\delta_x)
                    \end{array}
                    \hspace{-1mm}\right\} \\
                &\quad+ \nu^{-3} \left\{\hspace{-1mm}
                    \begin{array}{l}
                        \frac{1}{384} \Psi_{12}(\delta_x) - \frac{17}{192} \Psi_{10}(\delta_x) + \frac{127}{128} \Psi_8(\delta_x) \\[1.5mm]
                        - \frac{457}{96} \Psi_6(\delta_x) + \frac{1357}{128} \Psi_4(\delta_x) \\[1.5mm]
                        - \frac{651}{64} \Psi_2(\delta_x) + \frac{281}{128} \Psi(\delta_x)
                    \end{array}
                    \hspace{-1mm}\right\} \\
                &\quad+ \OO(\nu^{-4}),
            \end{aligned}
        \end{equation}
        where $\Psi_k(\delta_a) \leqdef \int_{\delta_a} y^k \phi(y) \rd y$.
        Now, using the fact that
        \begin{equation}
            \begin{aligned}
                &\Psi_{12}(\delta_a) = (10395 \delta_a + 3465 \delta_a^3 + 693 \delta_a^5 + 99 \delta_a^7 + 11 \delta_a^9 + \delta_a^{11}) \phi(\delta_a) + 10395 \Psi(\delta_a), \\
                &\Psi_{10}(\delta_a) = (945 \delta_a + 315 \delta_a^3 + 63 \delta_a^5 + 9 \delta_a^7 + \delta_a^9) \phi(\delta_a) + 945 \Psi(\delta_a), \\
                &\Psi_8(\delta_a) = (105 \delta_a + 35 \delta_a^3 + 7 \delta_a^5 + \delta_a^7) \phi(\delta_a) + 105 \Psi(\delta_a), \\
                &\Psi_6(\delta_a) = (15 \delta_a + 5 \delta_a^3 + \delta_a^5) \phi(\delta_a) + 15 \Psi(\delta_a), \\
                &\Psi_4(\delta_a) = (3 \delta_a + \delta_a^3) \phi(\delta_a) + 3 \Psi(\delta_a), \\
                &\Psi_2(\delta_a) = \delta_a \phi(\delta_a) + \Psi(\delta_a),
            \end{aligned}
        \end{equation}
        where $\Psi$ denotes the survival function of the standard normal distribution, Equations~\eqref{eq:c}, \eqref{eq:LD} and \eqref{eq:Cressie.generalization.eq.4} together yield
        \begin{equation}\label{eq:Cressie.generalization.eq.5}
            \begin{aligned}
                &\int_a^{\infty} \hspace{-0.6mm} f_{\nu}(x) \rd x - \int_{\delta_{a - c}}^{\infty} \hspace{-1mm} \phi(y) \rd y \\[-1.5mm]
                &\quad= \nu^{-1} \bigg\{\frac{1}{4}\delta_a^3 - \frac{3}{4} \delta_a - d_1\bigg\} \phi(\delta_a) + \nu^{-2} \left\{\hspace{-1mm}
                    \begin{array}{l}
                        \frac{1}{32} \delta_a^7 - \frac{31}{96} \delta_a^5 + \frac{91}{96} \delta_a^3 - \frac{41}{32} \delta_a \\[1.5mm]
                        - \big(\frac{\delta_a}{2} d_1^2 - d_1 + d_2\big)
                    \end{array}
                    \hspace{-1mm}\right\} \phi(\delta_a) \\[1mm]
                &\quad\quad+ \nu^{-3} \left\{\hspace{-1mm}
                    \begin{array}{l}
                        \frac{1}{384} \delta_a^{11} - \frac{23}{384} \delta_a^9 + \frac{29}{64} \delta_a^7 - \frac{305}{192} \delta_a^5 + \frac{1021}{384} \delta_a^3 - \frac{281}{128} \delta_a \\[2mm]
                        - \big(\frac{1}{6} (\delta_a^2 - 1) d_1^3 - \delta_a d_1^2 - \frac{1}{2} d_1 + \delta_a d_1 d_2 - d_2 + d_3\big)
                    \end{array}
                    \hspace{-1mm}\right\} \phi(\delta_a) + \OO(\nu^{-4}).
            \end{aligned}
        \end{equation}
        If we select $d_1 = d_2 = d_3 = 0$, then
        \begin{equation}\label{eq:Cressie.generalization.eq.5.order.0}
            \max_{a\in \R} \Big|\int_a^{\infty} f_{\nu}(x) \rd x - \int_{\delta_{a - c}}^{\infty} \hspace{-1mm} \phi(y) \rd y\Big| \leq \nu^{-1} \max_{a\in \R} \frac{|\delta_a|}{4} |\delta_a^2 - 3| \phi(\delta_a) + \OO(\nu^{-2}),
        \end{equation}
        which proves \eqref{eq:order.0.approx}.
        If we select $d_1 = \frac{\delta_a}{4} (\delta_a^2 - 3)$ and $d_2 = d_3 = 0$ to cancel the first brace in \eqref{eq:Cressie.generalization.eq.5}, then
        \begin{equation}\label{eq:Cressie.generalization.eq.5.order.1}
            \begin{aligned}
                &\max_{a\in \R} \Big|\int_a^{\infty} \hspace{-0.6mm} f_{\nu}(x) \rd x - \int_{\delta_{a - c}}^{\infty} \hspace{-1mm} \phi(y) \rd y\Big| \\[-0.5mm]
                &\leq \nu^{-2} \max_{a\in \R} \frac{|\delta_a|}{96} |13 \delta_a^4 - 88 \delta_a^2 + 195| \phi(\delta_a) + \OO(\nu^{-3}),
            \end{aligned}
        \end{equation}
        which proves \eqref{eq:order.1.approx}.
        If we select $d_1 = \frac{\delta_a}{4} (\delta_a^2 - 3)$, $d_2 = - \frac{\delta_a}{96} (13 \delta_a^4 - 88 \delta_a^2 + 195)$ and $d_3 = 0$ to cancel the first two braces in \eqref{eq:Cressie.generalization.eq.5}, then
        \begin{equation}\label{eq:Cressie.generalization.eq.5.order.2}
            \begin{aligned}
                &\max_{a\in \R} \Big|\int_a^{\infty} \hspace{-0.6mm} f_{\nu}(x) \rd x - \int_{\delta_{a - c}}^{\infty} \hspace{-1mm} \phi(y) \rd y\Big| \\[-0.5mm]
                &\leq \nu^{-3} \max_{a\in \R} \frac{|\delta_a|}{384} |35 \delta_a^6 - 293 \delta_a^4 + 1025 \delta_a^2 - 1767| \phi(\delta_a) + \OO(\nu^{-4}),
            \end{aligned}
        \end{equation}
        which proves \eqref{eq:order.2.approx}.
        If we select $d_1 = \frac{\delta_a}{4} (\delta_a^2 - 3)$, $d_2 = - \frac{\delta_a}{96} (13 \delta_a^4 - 88 \delta_a^2 + 195)$ and $d_3 = - \frac{\delta_a}{384} (35 \delta_a^6 - 293 \delta_a^4 + 1025 \delta_a^2 - 1767)$ to cancel the three braces in \eqref{eq:Cressie.generalization.eq.5}, then
        \begin{equation}\label{eq:Cressie.generalization.eq.5.order.3}
            \max_{a\in \R} \Big|\int_a^{\infty} \hspace{-0.6mm} f_{\nu}(x) \rd x - \int_{\delta_{a - c}}^{\infty} \hspace{-1mm} \phi(y) \rd y\Big| = \OO(\nu^{-4}),
        \end{equation}
        which proves \eqref{eq:order.3.approx}.
        This ends the proof.
    \end{proof}

    \begin{proof}[Proof of Theorem~\ref{thm:percentage.points}]
        Let $\alpha = S_{\nu}(\lambda)$ for some $\lambda\in B_{\nu}(\eta)$ and $\eta\in (0,1)$.
        By Theorem~\ref{thm:refined.approximations}, we have, as $\nu\to \infty$,
        \begin{equation}
            \left|\alpha - \Psi(\delta_{\lambda - \sum_{k=1}^i d_k / \nu^k})\right| \leq \frac{C_i'}{\nu^{i+1}}, \quad i\in \{0,1,2,3\},
        \end{equation}
        for some universal constants $C_i'$.
        A Taylor expansion for $\Psi$ at $\lambda$ yields, for $x = \lambda + \OO(\nu^{-1})$,
        \begin{equation}
            \frac{\Psi(x)}{\phi(\lambda)} = \frac{\Psi(\lambda)}{\phi(\lambda)} - \sum_{k=1}^i \frac{\phi^{(k-1)}(\lambda)}{\phi(\lambda) \, k!} (x - \lambda)^k + \OO(\nu^{-(i+1)}), \quad i\in \{0,1,2,3\},
        \end{equation}
        namely,
        \begin{equation}
            \begin{aligned}
                \frac{\Psi(x)}{\phi(\lambda)}
                &= \frac{\Psi(\lambda)}{\phi(\lambda)} - (x - \lambda) + \OO(\nu^{-2}), \quad \text{for } i = 1, \\[1mm]
                \frac{\Psi(x)}{\phi(\lambda)}
                &= \frac{\Psi(\lambda)}{\phi(\lambda)} - (x - \lambda) + \frac{\lambda}{2} (x - \lambda)^2 + \OO(\nu^{-3}), \quad \text{for } i = 2, \\[1mm]
                \frac{\Psi(x)}{\phi(\lambda)}
                &= \frac{\Psi(\lambda)}{\phi(\lambda)} - (x - \lambda) + \frac{\lambda}{2} (x - \lambda)^2 - \frac{(\lambda^2 - 1)}{6} (x - \lambda)^3 + \OO(\nu^{-4}), \quad \text{for } i = 3.
            \end{aligned}
        \end{equation}
        By applying these formulas with $x = \delta_{\lambda - \sum_{k=1}^i d_k / \nu^k}$, together with the fact that
        \begin{equation}
            \begin{aligned}
                \delta_{\lambda - \sum_{k=1}^i d_k / \nu^k} - \lambda
                &= \sqrt{\frac{\nu - 2}{\nu}} \left\{\lambda - \sum_{k=1}^i \frac{d_k}{\nu^k}\right\} - \lambda \\
                &= -\frac{\lambda}{\nu} - \frac{\lambda}{2 \nu^2} - \frac{\lambda}{2 \nu^3} + \OO(\nu^{-4}) - \left(1 - \frac{1}{\nu} - \frac{1}{2 \nu^2} + \OO(\nu^{-3})\right) \sum_{k=1}^i \frac{d_k}{\nu^k},
            \end{aligned}
        \end{equation}
        we get
        \begin{equation}
            \begin{aligned}
                \frac{\alpha}{\phi(\lambda)}
                &= \frac{\Psi(\lambda)}{\phi(\lambda)} + \left(\frac{\lambda}{\nu} + \frac{d_1}{\nu}\right) + \OO(\nu^{-2}), \quad \text{for } i = 1, \\[1mm]
                \frac{\alpha}{\phi(\lambda)}
                &= \frac{\Psi(\lambda)}{\phi(\lambda)} + \left(\frac{\lambda}{\nu} + \frac{\lambda}{2 \nu^2} + \frac{d_1}{\nu} + \frac{d_2}{\nu^2} - \frac{d_1}{\nu^2}\right) \\
                &\quad+ \frac{\lambda}{2} \left(\frac{\lambda^2}{\nu^2} + \frac{2 \lambda d_1}{\nu^2} + \frac{d_1^2}{\nu^2}\right) + \OO(\nu^{-3}), \quad \text{for } i = 2, \\[1mm]
                \frac{\alpha}{\phi(\lambda)}
                &= \frac{\Psi(\lambda)}{\phi(\lambda)} + \left(\frac{\lambda}{\nu} + \frac{\lambda}{2 \nu^2} + \frac{\lambda}{2 \nu^3} + \frac{d_1}{\nu} + \frac{d_2}{\nu^2} + \frac{d_3}{\nu^3} - \frac{d_1}{\nu^2} - \frac{d_2}{\nu^3} - \frac{d_1}{2 \nu^3}\right) \\
                &\quad+ \frac{\lambda}{2} \left(\frac{\lambda^2}{\nu^2} + \frac{2 \lambda d_1}{\nu^2} + \frac{d_1^2}{\nu^2} + \frac{\lambda^2}{\nu^3} + \frac{2 \lambda d_2}{\nu^3} - \frac{\lambda d^1}{\nu^3} + \frac{2 d_1 d_2}{\nu^3} - \frac{2 d_1^2}{\nu^3}\right) \\
                &\quad+ \frac{(\lambda^2 - 1)}{6} \left(\frac{\lambda^3}{\nu^3} + \frac{d_1^3}{\nu^3}\right) + \OO(\nu^{-4}), \quad \text{for } i = 3.
            \end{aligned}
        \end{equation}
        This ends the proof.
    \end{proof}



\section*{Funding}

F.\ Ouimet is supported by postdoctoral fellowships from the NSERC (PDF) and the FRQNT (B3X supplement and B3XR).

\section*{Conflicts of interest}

The author declares no conflict of interest.

%
%

\section*{References}
\phantomsection
\addcontentsline{toc}{chapter}{References}

\bibliographystyle{authordate1}
\bibliography{Ouimet_2022_LLT_Student_bib}

\end{document}